\documentclass[11pt,reqno]{amsart}
\usepackage[utf8]{inputenc}
\usepackage{amsmath, amsfonts, amsthm, amssymb}
\usepackage[margin=1in]{geometry}
\usepackage{mathtools}
\usepackage{stix}
\usepackage{float}
\usepackage{hyperref}

\usepackage{array}
\newcolumntype{C}[1]{>{\centering}m{#1}}

\newtagform{brackets}[\textbf]{$\langle$}{$\rangle$}
\usetagform{brackets}

\allowdisplaybreaks[4]

\newcommand{\gte}{\geqslant}
\newcommand{\lte}{\leqslant}
\newcommand{\presection}{\newpage\ \medskip}

\newtheorem{theorem}{Theorem}[section]
\newtheorem{lemma}[theorem]{Lemma}
\newtheorem{corollary}[theorem]{Corollary}
\newtheorem{prop}[theorem]{Proposition}

\newtheorem*{remark}{Remark}
\newtheorem*{definition}{Definition}
\newtheorem*{notation}{Notation}

\newcommand\fsk[2]{F_{#1,#2}}
\newcommand\mskn[3]{\mu_{#1,#2,#3}}
\newcommand\nfsk[2]{\tilde{F}_{#1,#2}}
\newcommand\qskn[3]{\mathcal Q_{#1,#2,#3}}
\newcommand\tqskn[3]{\tilde{\mathcal Q}_{#1,#2,#3}}

\newcommand\sms[2]{#1^{(#2)}}

\newcommand\mtop[2]{\genfrac{}{}{0pt}{}{\scriptstyle #1}{\scriptstyle #2}}

\newcommand\pp{\,.}
\newcommand\pc{\,,}

\newcommand\nn{\nonumber}
\newcommand\nc{\nonumber\\}

\DeclareMathOperator{\msu}{\uplus}

\newcommand\mbr{\mathbb R}
\newcommand\mbz{\mathbb Z}
\newcommand\mbc{\mathbb C}
\newcommand\mbq{\mathbb Q}
\newcommand\mbf{\mathbb F}

\newcommand\mca{\mathcal A}
\newcommand\mcp{\mathcal P}

\newcommand\mfp{\mathfrak p}
\newcommand\mfe{\mathfrak e}

\newcommand\Lsp{\mathfrak S}
\newcommand\lsp{\sigma}
\newcommand\Znk{Z_{n,k}}
\newcommand\Zkk{Z_{k,k}}

\newcommand{\stirling}[2]{\genfrac[]{0pt}{}{#1}{#2}}
\newcommand{\Stirling}[2]{\genfrac\{\}{0pt}{}{#1}{#2}}
\newcommand{\eulerian}[2]{\mathcal A_{#1,#2}}

\newcommand\vfs{\vphantom{\binom ns}}

\newcommand{\xxx}{\medskip\begin{center}*\quad *\quad *\end{center}\medskip}

\newcounter{mstep}

\begin{document}

\title[Moser Polynomials and Eulerian Numbers]{Moser Polynomials and Eulerian Numbers}
\author{Dmitri V.~Fomin}
\address{Boston, USA}
\email{fomin@hotmail.com}
\date{\today}
\keywords{symmetric polynomials, integer multisets, sumsets}
\subjclass[2010]{Primary: 05E05; Secondary: 11B75, 11P70}
\begin{abstract}
In this article we investigate properties of the Moser polynomials which appear in various problems from algebraic combinatorics. For instance, they can be used to solve the Generalized Moser's Problem on multiset recovery: Can a collection (multiset) of $n$ numbers can be uniquely restored given the collection of its $s$-sums? We prove some explicit formulas showing relationships between Moser polynomials and such popular algebraic combinatorial sequences as Eulerian and Stirling numbers.
\end{abstract}

\maketitle

\presection
\section{Introduction}
\label{sec:Intro}

Let us give a few formal definitions and notations that we will use throughout this article. 

\begin{notation}
For any natural number $p$ we will denote by $x^{[p]}$ the \textbf{``falling power'' polynomial}\footnote{also often called ``falling factorial''} $x(x-1)\cdots(x-p+1)$. That is,
$$
x^{[p]} = \prod_{m=0}^{p-1}(x-m) = p! \binom{x}{p} \pp
$$
Another common notation for this polynomial that you often see in texts on combinatorics is \textbf{Pochhammer symbol} $(x)_p$.
\end{notation}

\begin{notation}
We will denote by $\eulerian{n}{m}$ the so-called Eulerian number---the number of permutations of order $n$ with exactly $m$ ascents; an ascent in permutation $\pi = \{\pi_1, \pi_2, \ldots, \pi_n\}$ is index $1 \lte j < n$ such that $\pi_j < \pi_{j+1}$. In some texts the notation $\genfrac\langle\rangle{0pt}{}{n}{\raisebox{2pt}{$\scriptstyle m$}}$ is used.

Clearly, $\eulerian{n}{m} = 0$ if $m < 0$ or $m \gte n$. Therefore, skipping the zeros, all the Eulerian numbers can be arranged in the triangular shape constituting the Eulerian triangle, where the $m$th number in the $n$th row is $\eulerian{n}{m}$.
\end{notation}

The polynomial with coefficients taken from $n$th row of this triangle is called \textit{Eulerian polynomial} and denoted as $\mca_n(x)$, that is, 
$$
\mca_n(x) = \sum_{m=0}^{n-1}\eulerian{n}{m}x^m \pp
$$

Below is the table with the first eight rows of the Eulerian triangle:
\begin{table}[H]
\begin{tabular}{|m{1cm}|C{1cm}|C{1cm}|C{1cm}|C{1cm}|C{1cm}|C{1cm}|C{1cm}|m{1cm}|}
\hline \vspace{1mm}
$n \backslash m$ &  0  & 1     & 2     & 3      & 4     & 5     & 6   & \quad 7  \\
\hline \vspace{1mm} 
1 & \textbf{1}   &       &       &        &       &       &     &   \\
\hline \vspace{1mm}
2 & \textbf{1}   & \textbf{1}     &       &        &       &       &     &   \\
\hline \vspace{1mm}
3 & \textbf{1}   & \textbf{4}   & \textbf{1}     &       &        &       &       &   \\
\hline \vspace{1mm}
4 & \textbf{1}   & \textbf{11}  & \textbf{11}    & \textbf{1}     &        &       &       &    \\
\hline \vspace{1mm}
5 & \textbf{1}   & \textbf{26}  & \textbf{66}    & \textbf{26}    & \textbf{1}      &       &       &    \\
\hline \vspace{1mm}
6 & \textbf{1}   & \textbf{57}  & \textbf{302}   & \textbf{302}   & \textbf{57}     & \textbf{1}     &       &    \\
\hline \vspace{1mm}
7 & \textbf{1}   & \textbf{120} & \textbf{1191}  & \textbf{2416}  & \textbf{1191}   & \textbf{120}   & \textbf{1}     &    \\
\hline \vspace{1mm}
8 & \textbf{1}   & \textbf{247} & \textbf{4293}  & \textbf{15619} & \textbf{15619}  & \textbf{4293}  & \textbf{247}   & \quad \textbf{1}  \\
\hline
\end{tabular}
\end{table}

Various properties and formulas for Eulerian numbers and polynomials can be found in \cite{Pet}.

\begin{definition}
For any natural numbers $k \lte n$ the following two symmetric polynomials---$\mfp_{k,n}$ and $\mfe_{k,n}$---in $n$ variables $x_1$, \ldots, $x_n$, are defined by the formulas
\begin{align}
\mfp_{k,n}(x_1,\ldots,x_n) &= x_1^k + \ldots + x_n^k =  \sum_{i=1}^n x_i^k \ ; \nc
\mfe_{k,n}(x_1,\ldots,x_n) &= \sum_{1\lte \alpha_1 < \ldots < \alpha_k \lte n}x_{\alpha_1}x_{\alpha_2}\ldots x_{\alpha_k} 
\pp \nn
\end{align}
 
Both are, obviously, homogeneous symmetric polynomials of degree $k$. They are called, respectively, a \textbf{power-sum} polynomial and an \textbf{elementary symmetric} polynomial of $k$th order. When the set of variables is fixed, we will often denote these polynomials simply by $\mfp_k$ and $\mfe_k$.
\end{definition}

It is well-known that for any subfield $\mbf$ of complex numbers (such as $\mbq$, $\mbr$, or $\mbc$) both sets of polynomials $\{\mfe_k\}$ and $\{\mfp_k\}$ constitute a basis in the ring $\Lambda_\mbf[x_1,\ldots, x_n]$ of symmetric polynomials in $n$ variables.

\begin{definition}
For any natural number $k$ and any $n$-multiset $A = \{a_1,\ldots,a_n\}$ we define $\mfp_k(A)$, the power-sum of $k$th order of multiset $A$, as $\mfp_{k,n}(a_1,\ldots,a_n)$.
\end{definition}

\begin{notation}
For any two natural numbers $n$ and $s$ such that $n\gte s$, and an arbitrary $n$-multiset $A = \{a_1,\ldots,a_n\}$ we define the multiset $\sms As$ of its $s$-sums, i.e., the collection of all sums of the form
$$
a_{i_1}+a_{i_2}+\ldots+a_{i_s}\pc
$$
where $1\lte i_1 < i_2 < \ldots < i_s \lte n$.
\end{notation}

Obviously, power-sum $\mfp_k(\sms As)$ of multiset $\sms As$ is a symmetric homogeneous polynomial of degree $k$ in $a_1$, \ldots, $a_n$. Therefore if $k \lte n$, then this power-sum can be uniquely represented as
\begin{align}
\label{eq:fsk_sigma}
    \mfp_k(\sms As)
    &=  \qskn skn(\mfp_1(A), \mfp_2(A), \ldots, \mfp_{k-1}(A), \mfp_{k}(A)) \nc
    &=  \mskn skn\, \mfp_k(A) + \tqskn skn(\mfp_1(A), \mfp_2(A), \ldots, \mfp_{k-1}(A)) \pc
\end{align}
where $\qskn skn$ and $\tqskn skn$ are polynomials in variables $\mfp_1$, \ldots,$\mfp_{k-1}$, $\mfp_k$, and $\mfp_1$, \ldots, $\mfp_{k-1}$ respectively, and the coefficient $\mskn skn$ is a constant (in terms of variables $a_i$) which depends only on $s$, $k$, and $n$.

Representation of symmetric polynomials of $\sms As$ via $\mfp_i(A)$ is of interest not only for purely algebraic or combinatorial reasons. One well-known example from topology is computation of Chern classes for exterior powers of a vector bundle. Let $\omega:E\rightarrow M$ be a vector bundle of rank $n$; consider computation of Chern classes of its $s$th exterior power $\bigwedge^s\omega$ via Chern classes $c_i(\omega)$ of the original bundle. The result will be the formula (very similar to \eqref{eq:fsk_sigma}) which expresses elementary symmetric polynomials $\mfe_k(\sms As)$ of multiset $\sms As$ via elementary symmetric polynomials $\mfe_k(A)$ of multiset $A$.

While that formula is clearly different from \eqref{eq:fsk_sigma}, the ``top'' coefficient (at $c_k(\omega)$) in that formula is the same number $\mskn skn$ from Equation \eqref{eq:fsk_sigma}. Namely, we have
$$
c_k(\bigwedge^s\omega) = \mskn skn\, c_k(\omega) + \tilde{\mathcal R}_{s,k,n}(c_1(\omega), c_2(\omega), \ldots, c_{k-1}(\omega)) \pp
$$
(see the proof below in Proposition \ref{thm:eskn_coeff}.)

We will also show how formula \eqref{eq:fsk_sigma} and polynomials $\qskn skn$ can be used to solve the so-called (Generalized) Moser Problem, or the Multiset Recovery Problem. The Moser Problem asks whether, given the multiset $\sms As$, it is always possible to uniquely restore (recover) the original multiset $A$. This question was originally posed by Leo Moser in 1957 as a problem in \textsc{American Mathematical Monthly} for $s=2$ and $n=4, 5$ (see \cite{Mos}.)

In article \cite{Fom} the reader can find a comprehensive survey of results and methods on this problem, circa 2017. In the next two sections we will compute $\mskn skn$ and show how it can be used in the Moser Problem.

\presection
\section{Explicit formula for \texorpdfstring{$\qskn skn$}{Q s,k,n}}
\label{sec:Qformula}

In this section we prove the explicit formula for polynomials $\qskn skn$ and present some of its corollaries. 

Consider an integer partition $\lambda$ of $k$, that is, $\lambda = \{\lambda_1, \ldots, \lambda_d\}$, where $\{\lambda_i\}$ is the non-increasing sequence of $d$ positive integers such that their sum equals $k$. Then $\mfp_\lambda$ will denote monomial $\mfp_{\lambda_1}\cdots \mfp_{\lambda_d}$ in variables $\mfp_i$.

Let $\delta = \{\delta_1, \ldots, \delta_q\}$ denote the sequence of of partition $\lambda$'s multiplicities---meaning that there are exactly $q$ different numbers among $\lambda_i$ with $i$th of these numbers occurring $\delta_i$ times. Obviously, the sum of these multiplicities equals $d$. 

Since $\qskn skn$ is a polynomial in $\mfp_i$, it can be uniquely written in the following form
$$
\qskn skn = \sum_{\lambda\in\mcp(k)} c_{\lambda}\mfp_{\lambda} \pc
$$
with rational coefficients $c_\lambda$, where $\mcp(k)$ is the set of integer partitions of $k$. We cannot immediately claim that these coefficients are integers, as would be the case with elementary symmetric polynomials ($\mfe_k$ form a $\mbz$-basis of the ring of symmetric polynomials with integer coefficients $\Lambda_{\mbz}$, while $\mfp_k$ do not.)

\medskip

Our main objective now is to find an explicit formula for coefficient $c_\lambda$.

\begin{theorem}
\label{thm:qskn_coeff}
\begin{equation}
\label{eq:clambda}
c_{\lambda} =
\frac{(-1)^{s+d}k!}{\lambda_1!\cdots \lambda_d!\delta_1!\cdots \delta_q!}
    \sum_{p=0}^s
    \sum_{\mtop{m_1,\ldots,m_d\gte 1}{m_1+\cdots+m_d = s-p}}
    (-1)^p \binom np 
     m_1^{\lambda_1-1} \cdots m_d^{\lambda_d-1}
    \pc
\end{equation}
where the second summation is done over all length $d$ compositions of $s-p$; that is, the sequences of $d$ positive integers $\{m_1,\ldots,m_d\}$ such that the sum of these numbers equals $s-p$.
\end{theorem}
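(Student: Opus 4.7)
The plan is to collect all the numbers $\mfp_k(\sms As)$ for varying $k$ and $s$ into a single bivariate generating function. Set
$$
E(u,t) := \prod_{i=1}^n \bigl(1 + u\,e^{a_i t}\bigr) = \sum_{s\ge 0} u^s \sum_{|I|=s} \exp\Bigl(t \sum_{i\in I} a_i\Bigr),
$$
so that $\mfp_k(\sms As) = k!\,[u^s t^k]\,E(u,t)$ by reading off $[t^k]$ of each inner exponential. The whole theorem then reduces to extracting this one mixed coefficient from $E$, once $E$ is re-expressed in the power-sum variables $\mfp_j(A)$.

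The main manoeuvre is to take $\log E$: expanding $\log(1 + u e^{a_i t})$ as a power series in $u$ and summing over $i$ collapses the result to $\log E = n\log(1+u) + f(u,t)$ with
$$
f(u,t) = \sum_{r,k\ge 1}\frac{(-1)^{r-1}r^{k-1}}{k!}\,u^r t^k\,\mfp_k(A),
$$
the $n\log(1+u)$ coming from the $k=0$ piece where $\mfp_0(A)=n$. Exponentiating yields $E(u,t) = (1+u)^n \exp(f(u,t))$, in which the power sums of $A$ appear only inside $f$ and are already packaged term-wise in $u$ and $t$. Expanding $\exp(f)=\sum_{d\ge 0} f^d/d!$ and grouping the ordered $d$-tuples $(k_1,\ldots,k_d)$ of positive integers by their sorted partition $\lambda$ produces
$$
E(u,t) = (1+u)^n\sum_{\lambda}\frac{t^{|\lambda|}\,\mfp_\lambda}{\lambda_1!\cdots\lambda_d!\,\delta_1!\cdots\delta_q!}\prod_{i=1}^d F_{\lambda_i}(u),\qquad F_k(u) := \sum_{r\ge 1}(-1)^{r-1}r^{k-1}u^r,
$$
since the $d!/(\delta_1!\cdots\delta_q!)$ orderings of positions that fuse into a given $\lambda$ cancel most of the $1/d!$ and leave exactly $1/(\delta_1!\cdots\delta_q!)$.

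What remains is bare coefficient extraction. Only partitions $\lambda\vdash k$ contribute to $[t^k]$, and the $u$-extraction factors via a binomial convolution as
$$
[u^s](1+u)^n\prod_i F_{\lambda_i}(u) = \sum_{p=0}^s\binom{n}{p}\,[u^{s-p}]\prod_i F_{\lambda_i}(u),
$$
in which $[u^m]F_k(u) = (-1)^{m-1}m^{k-1}$ turns the inner coefficient into the advertised composition sum with accumulated sign $(-1)^{s-p-d}$. Rewriting this sign as $(-1)^{s+d}(-1)^p$ pulls $(-1)^{s+d}$ outside the double sum, and multiplying by the overall factor $k!/(\lambda_1!\cdots\lambda_d!\,\delta_1!\cdots\delta_q!)$ reproduces the claimed formula for $c_\lambda$ term by term.

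I expect the main difficulty to be the ``group $f^d/d!$ by partition'' step: it is easy to miscount how many orderings of $(k_1,\ldots,k_d)$ fuse into a given $\lambda$, and thus to misrecord the $1/(\delta_1!\cdots\delta_q!)$ factor; the double sign bookkeeping---the $(-1)^{r-1}$ carried by $f$ together with the $(-1)^{m_i-1}$ from each $[u^{m_i}]F_{\lambda_i}$---also has to be tracked carefully. Once those are settled, the rest is a mechanical two-variable coefficient extraction.
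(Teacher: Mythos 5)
Your proposal is correct and follows essentially the same route as the paper: the same product generating function $\prod_i(1+ue^{a_it})$, the same factorization into $(1+u)^n\exp(f)$ after taking logarithms (the paper reaches this by differentiating $\ln g$ in $y$ and integrating back, you by expanding $\log(1+ue^{a_it})$ directly, which is the same computation), and the same expansion of $\exp(f)$ with the $d!/(\delta_1!\cdots\delta_q!)$ count of orderings followed by binomial convolution in $u$. The sign bookkeeping $(-1)^{s-p-d}=(-1)^{s+d}(-1)^p$ and the coefficient $[u^m]F_k(u)=(-1)^{m-1}m^{k-1}$ both check out against the paper's derivation.
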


\begin{proof} 
Given an arbitrary multiset $A = \{a_1, \ldots, a_n\}$ of $n$ (complex or rational) numbers $a_i$, let us consider the two functions
\begin{equation}
\label{eq:fx_sum}
f(x) = \sum_{i=1}^n e^{a_i x}
\end{equation}
and
\begin{equation}
\label{eq:gxy_sum}
g(x, y) = \prod_{i=1}^n (1 + ye^{a_i x}) \pp
\end{equation}

Now, using Taylor series expansion
$$
e^{ax} = \sum_{j=0}^\infty \frac{a^j x^j}{j!} \pc
$$
we can rewrite formula \eqref{eq:fx_sum} as follows
$$
f(x) = \sum_{i=1}^n \sum_{j=0}^\infty \frac{a_i^j x^j}{j!}
     = \sum_{j=0}^\infty x^j \left(\frac{1}{j!}\sum_{i=1}^n a_i^j\right)
     = \sum_{j=0}^\infty x^j \frac{\mfp_j(A)}{j!} \pc
$$
and formula \eqref{eq:gxy_sum} as
\begin{align}
g(x, y) &= \sum_{m=0}^n y^m \sum_{i_1 < i_2 < \ldots < i_m} e^{a_{i_1}x}e^{a_{i_2}x}\ldots e^{a_{i_m}x} \nc
        &= \sum_{m=0}^n y^m \sum_{i_1 < i_2 < \ldots < i_m} e^{(a_{i_1} + a_{i_2} + \ldots + a_{i_m})x} \nc
        &= \sum_{m=0}^n y^m \sum_{i_1 < i_2 < \ldots < i_m} \sum_{j=0}^\infty \frac{\left(a_{i_1} + a_{i_2} + \ldots + a_{i_m}\right)^j x^j}{j!} \nc
        &= \sum_{m=0}^n y^m \sum_{j=0}^\infty x^j \sum_{i_1 < i_2 < \ldots < i_m} \frac{\left(a_{i_1} + a_{i_2} + \ldots + a_{i_m}\right)^j}{j!} \nc
        &= \sum_{m=0}^n \sum_{j=0}^\infty y^m x^j \frac{\mfp_j(\sms Am)}{j!}
         = \sum_{j,m\gte 0} x^j y^m \frac{\mfp_j(\sms Am)}{j!} \nn 
\end{align}
where $\mfp_j(\sms Am)$ is defined as zero if $m = 0$, except for $\mfp_0(\sms A0) = 1$. Also, obviously, $\mfp_j(\sms Am) = 0$ if $m > n$.

For convenience sake we will use---in this proof only---the following notations.
$$
\lsp_j     = \frac{\mfp_j(A)}{j!} \pc \quad 
\Lsp_{j,m} = \frac{\mfp_j(\sms Am)}{j!} \pp
$$

\noindent Clearly, $f(x)$ and $g(x,y)$ are generating functions for sequences $\lsp_j$ and $\Lsp_{j,m}$. The following formula ties these two functions together.

\begin{lemma}
\begin{equation}
\label{eq:gxy_expfkt}
g(x,y)=\exp\left(\sum_{j=1}^\infty (-1)^{j-1}f(j x)\frac{y^j}j \right)\pp
\end{equation}
\end{lemma}

\begin{proof}
\begin{align}
\frac d{dy}\ln g(x,y) &= \frac d{dy}\ln \prod_{i=1}^n\left(1+ye^{a_i x}\right) = \sum_{i=1}^n\frac1y\;
\frac {ye^{a_i x}}{1+ye^{a_i x}} = \nc
&= \sum_{i=1}^n \frac1y \sum_{j=1}^\infty (-1)^{j-1}\left(ye^{a_i x}\right)^j=
\sum_{j=1}^\infty (-y)^{j-1}\sum_{i=1}^n e^{j a_i x}=\nc
&= \sum_{j=1}^\infty (-y)^{j-1}f(j x)\pp \nn
\end{align}
Since $g(x,0) = 1 = \exp(0)$, we have
$$
g(x,y) = \exp\left(\int\limits_0^y \sum_{j=1}^\infty (-y)^{j-1}f(j x) \,dy \right) 
       = \exp\left(\sum_{j=1}^\infty (-1)^{j-1}f(j x)\frac{y^j}j \right)\pp
$$
\end{proof}

\begin{lemma}
\begin{equation}
\label{eq:pkas_pka}
\sum_{j,m \gte 0}\Lsp_{j,m} x^j y^m = (1+y)^n\exp
\left(\sum_{j,m \gte 1}(-1)^{m-1}\lsp_j m^{j-1} x^j y^m \right)\pp
\end{equation}
\end{lemma}
\begin{proof}
Using equation \eqref{eq:gxy_expfkt} as well as formulas
$$
g(x,y)=\sum_{j,m \gte 0}\Lsp_{j,m} x^j y^m \ ; \quad
f(x)=\sum_{j=0}^\infty \lsp_j x^j =n + \sum_{j=1}^\infty \lsp_j x^j \pc
$$
we obtain
\begin{align}
\sum_{j,m \gte 0} \Lsp_{j,m} x^j y^m 
&= \exp\left(\sum_{m=1}^\infty(-1)^{m-1}
\left(n + \sum_{j=1}^\infty \lsp_j(mx)^j\right) \frac{y^m}m\right) \nc
&= \exp\left (\sum_{m=1}^\infty (-1)^{m-1}n\frac{y^m}m\right)\cdot\exp
 \left( \sum_{m=1}^\infty (-1)^{m-1}\sum_{j=1}^\infty \lsp_j(m x)^j
 \frac{y^m}m\right) \nc
&= \left(\exp\left( \sum_{m=1}^\infty (-1)^{m-1}\frac{y^m}m \right)\right)^n
\cdot \exp\left(\sum_{j,m \gte 1}(-1)^{m-1}\lsp_j m^{j-1} x^j y^m
\right)\pp \nn
\end{align}
Now it suffices to note that
$$
\exp\left(\sum_{m=1}^\infty (-1)^{m-1}\frac{y^m}m\right) = \exp\bigl(\ln(1+y)\bigr)=1+y\pp
$$
\end{proof}

Let us go back to equation \eqref{eq:pkas_pka} and consider coefficients at the term $x^k y^s$ on both sides of it. Since they must be the same, the following equality holds.
\begin{equation}
\label{eq:lsp_binom}
\Lsp_{k,s} = \sum_{p=0}^s \binom{n}{p} c_{k,s-p} \pc
\end{equation}
where $c_{k,s-p}$ is the coefficient at the term $x^k y^{s-p}$ in 
\begin{equation}
\label{eq:c_ksp}
\exp\left(\sum_{j,m \gte 1}(-1)^{m-1}\lsp_j m^{j-1} x^j y^m \right) \pp
\end{equation}

Now in order to express coefficient $c_{k,s-p}$ through $\lsp_k$ we rewrite and expand the expression \eqref{eq:c_ksp} as follows.
\begin{align}
\exp\left(\sum_{j,m \gte 1}(-1)^{m-1}\lsp_j m^{j-1} x^j y^m \right) = 
1 &+ \frac{1}{1!}\left(\sum_{j,m \gte 1}(-1)^{m-1}\lsp_j m^{j-1} x^j y^m \right)  \nc
  &+ \frac{1}{2!} \left(\sum_{j,m \gte 1}(-1)^{m-1}\lsp_j m^{j-1} x^j y^m \right)^2 \nc
  &+ \frac{1}{3!} \left(\sum_{j,m \gte 1}(-1)^{m-1}\lsp_j m^{j-1} x^j y^m \right)^3 + \ldots \nn
\end{align}

Thus the coefficient at $x^ky^{s-p}$ equals
$$
\sum_{d=1}^{s-p}
    \frac{(-1)^{s-p-d}}{d!}
    \sum_{\mtop{J, M}{\|J\| = k, \|M\| = s-p}}
    \prod_{i=1}^d \lsp_{j_i}m_i^{j_i-1} \pc
$$
where summation is done over all length $d$ compositions $J = \{j_1, \ldots, j_d\}$ of number $k$, and all length $d$ compositions $M = \{m_1, \ldots, m_d\}$ of number $s-p$.

Finally, from this, using the formula \eqref{eq:lsp_binom} and definitions of $\Lsp_{j,m}$ and $\lsp_j$ we obtain
$$
\qskn skn(\mfp_1,\ldots,\mfp_k) = \sum_{p=0}^s \binom np
          \sum_{d=1}^{s-p} (-1)^{s-p-d}\frac1{d!}
          \sum_{\mtop{J, M}{\|J\| = k, \|M\| = s-p}}
          \frac{k!}{j_1!\cdots j_d!} 
          \prod_{i=1}^d m_i^{j_i-1} \mfp_{j_i}
          \pp
$$

If we combine the like terms, then each term that contains monomial $\mfp_\lambda = \mfp_{\lambda_1}\mfp_{\lambda_2}\cdots\mfp_{\lambda_d}$ occurs in the sum above exactly $\binom{d}{\delta_1,\ldots,\delta_q}$ times; therefore the coefficient $c_\lambda$ at $\mfp_\lambda$ equals
\begin{align}
\frac{k!}{\lambda_1!\cdots \lambda_d!}
\frac{d!}{\delta_1!\cdots \delta_q!}
&\sum_{p=0}^s \binom np
          (-1)^{s-p-d}\frac1{d!}
          \sum_{\mtop{m_1,\ldots,m_d\gte 1}{m_1+\cdots+m_d = s-p}}
          \prod_{i=1}^d m_i^{\lambda_i-1}
\nc
&=
\frac{(-1)^{s+d}k!}{\lambda_1!\cdots \lambda_d!\delta_1!\cdots \delta_q!}
    \sum_{p=0}^s 
    \sum_{\mtop{m_1,\ldots,m_d\gte 1}{m_1+\cdots+m_d = s-p}}
    (-1)^p \binom np 
    m_1^{\lambda_1-1} \cdots m_d^{\lambda_d-1}
\pc \nn
\end{align}
which concludes the proof.
\end{proof}

\begin{remark}
Here is another, slightly different, way to present the same expression.
$$
c_\lambda =
    \frac{(-1)^{d}k!}{\lambda_1!\cdots \lambda_d!\delta_1!\cdots \delta_q!}
    \sum_{\|M\| \lte s}
    (-1)^{\|M\|} \binom n{s-\|M\|} 
    m_1^{\lambda_1-1} \cdots m_d^{\lambda_d-1}
    \pc
$$
where summation is done over all length $d$ compositions $M = \{m_1, \ldots, m_d\}$ of a positive integer not greater than $s$.
\end{remark}

The following are immediate corollaries of the formula \eqref{eq:clambda}.

\begin{corollary}
\label{thm:qskn_int}
All coefficients $c_\lambda$ of polynomial $\qskn skn$ are integers.
\end{corollary}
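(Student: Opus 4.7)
My plan is to observe that formula \eqref{eq:clambda} already exhibits $c_\lambda$ as the product of a rational prefactor and an integer sum, so it suffices to verify separately that each factor lies in $\mbz$. Nothing more subtle is required---the work was already done in proving Theorem \ref{thm:qskn_coeff}.

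First I would dispose of the double sum. Every summand is of the form $(-1)^p \binom{n}{p}\, m_1^{\lambda_1-1}\cdots m_d^{\lambda_d-1}$. The binomial coefficient $\binom{n}{p}$ is an integer, and since each $\lambda_i \gte 1$ the exponents $\lambda_i - 1$ are non-negative integers; the bases $m_i$ are positive integers by the definition of a composition. Hence each $m_i^{\lambda_i - 1}\in\mbz$, and the whole inner double sum is an integer.

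The substantive step is the prefactor. I would rewrite
$$
\frac{k!}{\lambda_1!\cdots \lambda_d!\,\delta_1!\cdots \delta_q!}
 = \frac{1}{\delta_1!\cdots\delta_q!}\binom{k}{\lambda_1,\ldots,\lambda_d}
$$
and identify this quantity as the number of (unordered) set partitions of $\{1,2,\ldots,k\}$ whose multiset of block sizes is exactly $\{\lambda_1,\ldots,\lambda_d\}$. The multinomial coefficient counts \emph{ordered} tuples of disjoint blocks of the prescribed sizes; dividing by $\delta_1!\cdots\delta_q!$ removes the overcounting arising from permutations among blocks of equal size. Since this is the count of a combinatorial set, it is a non-negative integer. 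Combined with the sign $(-1)^{s+d}$ and the integrality of the inner sum, this yields $c_\lambda \in \mbz$.

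I do not anticipate any real obstacle: the only point worth stating explicitly is the set-partition interpretation of the prefactor, which makes the divisibility by $\delta_1!\cdots\delta_q!$ manifest (without it, $\binom{k}{\lambda_1,\ldots,\lambda_d}$ alone need not be divisible by $\delta_1!\cdots\delta_q!$, but the combinatorial reading guarantees the quotient is integral).
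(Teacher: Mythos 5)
Your proposal is correct and follows the same route as the paper: the inner double sum is manifestly an integer, and the prefactor $k!/(\lambda_1!\cdots\lambda_d!\,\delta_1!\cdots\delta_q!)$ is recognized as the number of set partitions of a $k$-element set into blocks of sizes $\lambda_1,\ldots,\lambda_d$, hence an integer. The paper's proof states exactly this combinatorial interpretation; you merely spell out the overcounting argument in more detail.
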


\begin{proof}
It is sufficient to show that the coefficient 
$$
\frac{k!}{\lambda_1!\cdots \lambda_d!\delta_1!\cdots \delta_q!}
$$
on the right-hand side of the formula \eqref{eq:clambda} is an integer. But this is the number of ways to dissect the set with $k$ elements into $d$ subsets containing $\lambda_1$, $\lambda_1$, \ldots, and $\lambda_d$ elements, and therefore we are done.
\end{proof}


\begin{corollary}
\label{thm:qskn_zero}
Coefficient $c_\lambda$ is nonzero only if length $d$ of partition $\lambda$ does not exceed $s$.
\end{corollary}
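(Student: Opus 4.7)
The plan is to read off the vanishing directly from the explicit formula \eqref{eq:clambda}. The outer factor $\dfrac{(-1)^{s+d}k!}{\lambda_1!\cdots\lambda_d!\,\delta_1!\cdots\delta_q!}$ is manifestly nonzero, so one only needs to show that the double sum is empty (and hence zero) whenever $d > s$.

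The inner summation is over length-$d$ compositions of $s-p$, i.e., sequences $(m_1,\ldots,m_d)$ of strictly positive integers summing to $s-p$. Such a composition exists if and only if $s-p \geq d$, because the minimum possible value of $m_1+\cdots+m_d$ under the constraint $m_i \geq 1$ is exactly $d$. Consequently, the inner sum contributes a nonzero quantity only when $p \leq s-d$.

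Therefore, if $d > s$, there is no value of $p$ in the range $0 \leq p \leq s$ that satisfies $p \leq s-d$, so every inner sum in \eqref{eq:clambda} is empty and $c_\lambda = 0$. The main (and only) step is this elementary observation about compositions; there is no serious obstacle.
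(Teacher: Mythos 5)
Your proof is correct and follows exactly the paper's own argument: the paper's one-line proof simply observes that a nonempty inner sum forces $d \lte s-p \lte s$, which is precisely the composition-existence condition you spell out. No difference in substance, only in the level of detail.
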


\begin{proof}
Indeed, from formula \eqref{eq:clambda} it follows that $d \lte s-p \lte s$.
\end{proof}

\medskip

Finally, a short and easy proof of the fact we have mentioned at the end of Introduction section.

\begin{prop}
\label{thm:eskn_coeff}
$$
\mfe_k(\sms As) = \mskn skn \, \mfe_k(A) + \mathcal W(\mfe_1(A), \ldots, \mfe_{k-1}(A)) \pc
$$
where $\mathcal W(e_1, \ldots, e_{k-1})$ is some integer polynomial in $e_i$.
\end{prop}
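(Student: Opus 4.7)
The plan is to combine the $\mbz$-basis property of the elementary symmetric polynomials with Newton's identity, taking Equation \eqref{eq:fsk_sigma} as input.

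First, since each element of $\sms As$ is a $\mbz$-linear combination of $a_1,\ldots,a_n$, the expression $\mfe_k(\sms As)$ is a symmetric polynomial in the $a_i$ with integer coefficients, homogeneous of degree $k$. Because $\{\mfe_j(A)\}$ forms a $\mbz$-basis of the ring $\Lambda_\mbz$ of symmetric polynomials in $a_1,\ldots,a_n$ with integer coefficients, it admits a unique expansion
$$
\mfe_k(\sms As) = \sum_{\lambda \vdash k} d_\lambda \, \mfe_\lambda(A), \qquad d_\lambda \in \mbz,
$$
where $\mfe_\lambda(A) := \mfe_{\lambda_1}(A)\cdots\mfe_{\lambda_d}(A)$. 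Isolating the one-part partition $\lambda = (k)$ gives $\mfe_k(\sms As) = d_{(k)}\,\mfe_k(A) + \mathcal W(\mfe_1(A),\ldots,\mfe_{k-1}(A))$ for an integer polynomial $\mathcal W$. This already settles the integrality part of the claim; it remains only to verify that $d_{(k)} = \mskn skn$.

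For the identification of the leading coefficient I would invoke Newton's identity $\mfp_k = (-1)^{k-1} k\,\mfe_k + \Phi_k(\mfe_1,\ldots,\mfe_{k-1})$, with $\Phi_k \in \mbz[\mfe_1,\ldots,\mfe_{k-1}]$, applied to both multisets $A$ and $\sms As$ in the identity \eqref{eq:fsk_sigma}. The same degree/$\mbz$-basis argument of the first step, applied for each $i < k$, shows that $\mfe_i(\sms As)$ is a $\mbz$-polynomial in $\mfe_1(A),\ldots,\mfe_{k-1}(A)$; Newton's identity gives the analogous statement for each $\mfp_i(A)$ with $i < k$. After grouping terms, both sides of \eqref{eq:fsk_sigma} therefore take the form
$$
(-1)^{k-1} k \cdot C \cdot \mfe_k(A) + (\text{polynomial in } \mfe_1(A),\ldots,\mfe_{k-1}(A)),
$$
with $C = d_{(k)}$ on the left and $C = \mskn skn$ on the right. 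Since $\mfe_1(A),\ldots,\mfe_n(A)$ are algebraically independent (as $k \le n$), equating the coefficients of $\mfe_k(A)$ forces $d_{(k)} = \mskn skn$.

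The only subtlety is bookkeeping: the argument cleanly splits into an integrality part (handled by the $\mbz$-basis expansion in the first step) and an identification part (handled by the Newton-identity substitution in the second step), and I do not anticipate any serious obstacle beyond keeping these two roles distinct.
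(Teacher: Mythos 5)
Your proof is correct, and it uses the same basic toolkit as the paper (Newton--Girard identities together with Equation \eqref{eq:fsk_sigma}), but it is organized differently in a way that is worth noting. The paper simply chains substitutions: it writes $\mfe_k(\sms As) = \frac{(-1)^{k-1}}{k}\mfp_k(\sms As) + \mathcal E(\mfp_1(\sms As),\ldots,\mfp_{k-1}(\sms As))$, replaces $\mfp_k(\sms As)$ using \eqref{eq:fsk_sigma}, and then converts back to elementary symmetric polynomials, reading off $\mskn skn$ as the coefficient of $\mfe_k(A)$. That route goes through the rational-coefficient polynomials $\mathcal E$ and $\mathcal P$ (note the factor $\frac{(-1)^{k-1}}{k}$), so the integrality of $\mathcal W$ is not actually visible from the computation itself and is left implicit. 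Your two-step split fixes exactly this: the first step, expanding $\mfe_k(\sms As)$ in the $\mbz$-basis $\{\mfe_\lambda(A)\}$ of $\Lambda_\mbz$ and isolating the one-part partition, is what genuinely establishes that $\mathcal W$ has integer coefficients, while the second step (substituting Newton's identity into both sides of \eqref{eq:fsk_sigma} and invoking the algebraic independence of $\mfe_1,\ldots,\mfe_n$, valid since $k\lte n$) pins down the leading coefficient $d_{(k)}=\mskn skn$. Your identification step is slightly more roundabout than the paper's direct computation, but the payoff is a complete justification of the integrality claim in the statement; on that point your write-up is actually more careful than the paper's.
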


\begin{proof}
From Newton-Girard identities (see Theorems 2.9--2.14 in \cite{MenRem}) we have 
\begin{align}
\mfe_k(a_1, \ldots, a_n) = \frac{(-1)^{k-1}}{k}\mfp_k(a_1, \ldots, a_n) + \mathcal E(\mfp_1, \ldots, \mfp_{k-1}) \pc
\nc
\mfp_k(a_1, \ldots, a_n) = (-1)^{k-1}k\mfe_k(a_1, \ldots, a_n) + \mathcal P(\mfe_1, \ldots, \mfe_{k-1}) \pc
\nn
\end{align}
where $\mathcal E$, $\mathcal P$ are polynomials with rational coefficients. Notice that the coefficients $\frac{(-1)^{k-1}}{k}$ and $(-1)^{k-1}k$ are dependent only on $k$, not on $n$. Thus we have
\begin{align}
\mfe_k(\sms As) &= \frac{(-1)^{k-1}}{k}\mfp_k(\sms As) + \mathcal E (\mfp_1(\sms As), \ldots, \mfp_{k-1}(\sms As)) \nc
&= \frac{(-1)^{k-1}}{k}\mskn skn\mfp_k(A) + \mathcal V(\mfp_1(A), \ldots, \mfp_{k-1}(A)) \nc
&= \mskn skn \mfe_k(A) + \mathcal W(\mfe_1(A), \ldots, \mfe_{k-1}(A)) \pc \nn
\end{align}
using the fact that $\mfp_i(\sms As)$ ($i = 1, \ldots, k-1$) are polynomials in $\mfp_1(A)$, \ldots, $\mfp_{k-1}(A)$. 
\end{proof}

\presection
\section{Moser polynomials and their properties}
\label{sec:MoserPoly}

Theorem \ref{thm:qskn_coeff} provides us with another proof of an important formula, which was originally obtained in 1962 by Gordon, Fraenkel, and Straus (\cite{GorFraStr}) specifically for the purpose of solving the Moser's Problem.

\begin{theorem}[Gordon-Fraenkel-Straus Theorem]
\label{thm:fskn}
 For any natural numbers $s$, $k$, $n$ such that $s, k \lte n$ we have
\begin{equation}
\label{eq:fsk_main}
\mskn skn = \sum_{p=1}^s (-1)^{p-1}\binom{n}{s-p}p^{k-1} \pp
\end{equation}
\end{theorem}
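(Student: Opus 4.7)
The plan is to apply Theorem \ref{thm:qskn_coeff} to the distinguished partition. By definition \eqref{eq:fsk_sigma}, $\mskn skn$ is precisely the coefficient of the monomial $\mfp_k$ in $\qskn skn$, i.e., $c_\lambda$ for the partition $\lambda = \{k\}$ consisting of a single part of size $k$. For this $\lambda$ we have $d = 1$, $\lambda_1 = k$, and the multiplicity sequence has length $q = 1$ with $\delta_1 = 1$.

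Plugging these values into formula \eqref{eq:clambda}, the prefactor collapses since $\lambda_1! = k!$ and $\delta_1! = 1$:
$$
\mskn skn \;=\; (-1)^{s+1}\sum_{p=0}^{s}\;\sum_{\mtop{m_1\gte 1}{m_1 = s-p}} (-1)^p\binom{n}{p}m_1^{k-1}\pp
$$
The inner sum contains a single composition $m_1 = s-p$, and the constraint $m_1 \gte 1$ removes the term $p = s$. Thus
$$
\mskn skn \;=\; (-1)^{s+1}\sum_{p=0}^{s-1}(-1)^p\binom{n}{p}(s-p)^{k-1}\pp
$$

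The final step is simply a reindexing. Setting $p' = s - p$, so $p$ ranges through $0,\ldots,s-1$ while $p'$ runs through $s, s-1, \ldots, 1$, the parity of $(-1)^{s+1+p}$ becomes $(-1)^{2s+1-p'} = (-1)^{p'-1}$, yielding
$$
\mskn skn \;=\; \sum_{p'=1}^{s}(-1)^{p'-1}\binom{n}{s-p'}(p')^{k-1}\pc
$$
which is exactly \eqref{eq:fsk_main} after renaming $p'$ back to $p$.

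There is essentially no hard step here once Theorem \ref{thm:qskn_coeff} is in hand; the only thing to watch is the sign bookkeeping and the boundary $p = s$ that gets excluded because of the positivity requirement $m_1 \gte 1$ on the composition. Thus formula \eqref{eq:fsk_main} is an immediate specialization of the explicit expression for $c_\lambda$ to the trivial one-part partition $\lambda = \{k\}$.
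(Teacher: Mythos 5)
Your proof is correct and follows essentially the same route as the paper: both specialize Theorem \ref{thm:qskn_coeff} to the one-part partition $\lambda=\{k\}$ and then reindex the resulting single sum. Your version is in fact slightly more careful, since you explicitly note that the constraint $m_1\gte 1$ excludes the $p=s$ term before the substitution $p'=s-p$, whereas the paper leaves that boundary term implicit.
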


\begin{proof}
By definition, $\mskn skn = c_\lambda$, where $\lambda$ is the 1-part partition $\{k\}$. Thus $d=1$, $\lambda_1=d$, and $\delta_1=1$. There is only one length 1 composition $M$ of $s-p$, and therefore equality \eqref{eq:clambda} is reduced to the following
$$
c_\lambda = \sum_{p=0}^s (-1)^{s-p-1} \binom np (s-p)^{k-1} \pc
$$
which is equivalent to formula \eqref{eq:fsk_main}.
\end{proof}

Another way of deducing the last theorem from Theorem \ref{thm:qskn_coeff} is to use formulas \eqref{eq:lsp_binom} and \eqref{eq:c_ksp} for one specific $n$-multiset. 

\begin{notation}
\label{def:znk}
For natural numbers $n\gte k$ we define $\Znk$ as $n$-multiset that consists of $n-k$ zeros and all complex $k$th roots of unity; that is,
\begin{equation}
\label{eq:znk}
\Znk = \Big\{e^{2\pi i \cdot m/k}: m=0,1,\ldots, k-1 \Big\} \msu \Big\{ \underbrace{0,\ldots,0}_{n-k} \Big\} \pp
\end{equation}
\end{notation}

\begin{lemma}
\label{thm:fskn_znk} For $n\gte k$ and $n\gte s$ we have $\mskn skn = \mfp_k(\sms{\Znk}s)/k$.
\end{lemma}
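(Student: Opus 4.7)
The plan is to apply the defining identity \eqref{eq:fsk_sigma} to the very specific multiset $A = \Znk$, whose lower power sums $\mfp_1(\Znk), \ldots, \mfp_{k-1}(\Znk)$ all vanish. This choice is designed to kill the entire tail polynomial $\tqskn skn$ on the right-hand side of \eqref{eq:fsk_sigma} and isolate $\mskn skn$ as essentially the only surviving contribution.

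First I would compute $\mfp_j(\Znk)$ for $j \ge 1$. Since the $n-k$ zero entries contribute nothing,
$$
\mfp_j(\Znk) = \sum_{m=0}^{k-1}\left(e^{2\pi i/k}\right)^{jm},
$$
which is a standard geometric sum equal to $k$ when $k \mid j$ and $0$ otherwise. In particular $\mfp_1(\Znk) = \cdots = \mfp_{k-1}(\Znk) = 0$, while $\mfp_k(\Znk) = k$.

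Next I would verify that $\tqskn skn$ has no constant term. Since $\mfp_k(\sms As)$ is a polynomial in $a_1, \ldots, a_n$ that is homogeneous of degree $k$, and since each $\mfp_j$ is itself homogeneous of degree $j$ in the $a_i$, the representation in \eqref{eq:fsk_sigma} must be weight-homogeneous of weight $k$ with $\mathrm{wt}(\mfp_j) = j$. Consequently, the part $\tqskn skn$ not containing $\mfp_k$ has no monomial of weight $0$, so $\tqskn skn(0,\ldots,0) = 0$. Substituting $A = \Znk$ into \eqref{eq:fsk_sigma} therefore collapses the identity to
$$
\mfp_k\bigl(\sms{\Znk}s\bigr) = \mskn skn \cdot \mfp_k(\Znk) + \tqskn skn(0,\ldots,0) = k\,\mskn skn,
$$
and dividing by $k$ yields the stated formula. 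The argument has no serious obstacle; the only step demanding a moment's reflection is the vanishing of $\tqskn skn$ at the origin, which is immediate from the weighted-homogeneity observation above.
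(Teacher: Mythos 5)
Your proof is correct and follows essentially the same route as the paper: evaluate \eqref{eq:fsk_sigma} at $A = \Znk$, observe that $\mfp_1(\Znk) = \cdots = \mfp_{k-1}(\Znk) = 0$ while $\mfp_k(\Znk) = k$, and conclude that the tail term vanishes. Your extra remark that $\tqskn skn$ has no constant term (by weighted homogeneity), so that evaluating it at the origin really gives zero, is a small but welcome point of rigor that the paper leaves implicit.
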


\begin{proof} Obviously, if $k \mid j$, then $\mfp_j(\Znk) = k$, otherwise $\mfp_j(\Znk) = 0$. Therefore, in equation \eqref{eq:fsk_sigma} for this multiset the last summand on the right-hand side $\tqskn skn(\mfp_1(\Znk), \mfp_2(\Znk), \ldots, \mfp_{k-1}(\Znk))$ is zero. Hence, we have $\mfp_k\left(\sms{\Znk}s\right) = \mskn skn\, \mfp_k(\Znk)$.
\end{proof}

We will leave finalizing this slightly different approach to the reader.

\xxx

Formula \eqref{eq:fsk_main} shows us that $\mskn skn$ is a polynomial in $n$ of degree $s-1$, which leads us to the following.

\begin{definition}
For any natural numbers $s$ and $k$ we will define \textbf{Moser polynomial} $\fsk sk(x)$ by the formula
\begin{equation}
\label{eq:fsk_psum}
\fsk sk(x) = \sum_{j=1}^s (-1)^{j-1} j^{k-1} \binom{x}{s-j} \pp
\end{equation}
The \textbf{normalized Moser polynomial} $(s-1)!\fsk sk(x)$ has integer coefficients and will be denoted by $\nfsk sk(x)$.
\end{definition}

This means that for natural numbers $s$, $k$ and $n$ such that $n\gte s$ and $n\gte k$ we have
$$
\mskn skn = \fsk sk(n) \pp
$$

Formula \eqref{eq:fsk_psum} can be rewritten to explicitly show the Moser polynomial's coefficients:
$$
\fsk sk(x) = \sum_{j=0}^s x^j \left( (-1)^{s+j-1} \sum_{i=j}^s \frac{(s-i)^{k-1} }{i!}
\stirling{i}{j} \right) \pc
$$
where $\stirling{i}{j}$ denotes the unsigned (positive) Stirling number of the first kind (see Chapter 6 in \cite{GraKnuPat}). Thus for any Moser polynomial signs of its coefficients alternate.

\medskip

The following theorem (proved in \cite{GorFraStr}) follows directly from Theorem \ref{thm:fskn} combined with the definition of the Moser polynomials.

\begin{theorem}
\label{thm:moser_problem}
Given natural numbers $n$ and $s$ such that $n\gte s$, consider sequence $\fsk sk(n)$, $k = 1, \ldots, n$. If none of these values vanish, then the answer to the Moser problem is positive---in other words, any $n$-multiset $A$ can be uniquely recovered from multiset $\sms As$ of its $s$-sums.
\end{theorem}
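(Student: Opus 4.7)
The plan is to reconstruct $A$ from $\sms As$ by first recovering all the power-sums $\mfp_k(A)$ for $k = 1, \ldots, n$, and then recovering the multiset $A$ itself from these power-sums via the standard correspondence between an $n$-multiset of complex numbers and the monic polynomial $\prod_{i=1}^n(x - a_i)$ whose roots (with multiplicity) are its elements.

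First I would note that from $\sms As$ one can immediately compute every power-sum $\mfp_k(\sms As)$ by raising each element of $\sms As$ to the $k$-th power and summing. So the task reduces to expressing each $\mfp_k(A)$ in terms of the $\mfp_j(\sms As)$ with $j \lte k$. I would prove by induction on $k$, for $1 \lte k \lte n$, that $\mfp_k(A)$ is uniquely determined by $\sms As$. The base case $k = 1$ follows from formula \eqref{eq:fsk_sigma}, since $\tqskn s1n$ is a polynomial in zero variables and thus a constant---in fact $0$, by comparing constant terms in the $a_i$ on both sides---so $\mfp_1(A) = \mfp_1(\sms As)/\fsk s1(n)$, which is legitimate because $\fsk s1(n)\neq 0$ by hypothesis. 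For the inductive step, assuming $\mfp_1(A), \ldots, \mfp_{k-1}(A)$ have already been determined, the second line of \eqref{eq:fsk_sigma} combined with $\mskn skn = \fsk sk(n) \neq 0$ yields
$$
\mfp_k(A) = \frac{\mfp_k(\sms As) - \tqskn skn(\mfp_1(A), \ldots, \mfp_{k-1}(A))}{\fsk sk(n)} \pc
$$
so $\mfp_k(A)$ is determined as well.

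To conclude, I would invoke the Newton--Girard identities (already used in the proof of Proposition \ref{thm:eskn_coeff}) to convert the $n$ power-sums $\mfp_1(A), \ldots, \mfp_n(A)$ into the $n$ elementary symmetric polynomials $\mfe_1(A), \ldots, \mfe_n(A)$, which by Vieta's formulas are, up to alternating signs, the coefficients of the monic polynomial $P(x) = \prod_{i=1}^n (x - a_i)$. Since a size-$n$ multiset of complex numbers is determined by this polynomial (namely, as the multiset of its roots counted with multiplicity), $A$ is uniquely recovered from $\sms As$.

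There is no real obstacle here, given the machinery already developed. The only points to watch are that formula \eqref{eq:fsk_sigma} requires $k \lte n$---satisfied throughout the induction---and that exactly $n$ power-sums are needed for the Newton--Girard step to pin down a size-$n$ multiset, so iterating $k$ from $1$ up to $n$ is both necessary and sufficient. The hypothesis that none of $\fsk s1(n), \ldots, \fsk sn(n)$ vanishes is used exactly once per inductive step, to guarantee that the division in the displayed formula above is meaningful.
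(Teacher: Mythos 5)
Your proof is correct and follows essentially the same route as the paper: recover $\mfp_1(A),\ldots,\mfp_n(A)$ by induction using \eqref{eq:fsk_sigma} and the nonvanishing of $\fsk sk(n)$, then note that a size-$n$ multiset is determined by its first $n$ power-sums. The only difference is that you spell out the final step via Newton--Girard and Vieta, which the paper simply cites as a known fact.
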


\begin{proof}
To begin with, $\mfp_1(\sms As) = \mskn s1n \mfp_1(A)$. Since $\mskn s1n = \fsk s1(n) \neq 0$, $\mfp_1(A)$ is fully determined by $\mfp_1(\sms As)$, and, therefore, by the multiset $\sms As$.

Now easy induction by $k$, using formula \eqref{eq:fsk_sigma} and the fact that $\mskn skn = \fsk sk(n)$ is nonzero, shows that for any $k \lte n$ the power-sum $\mfp_k(A)$ is determined by the power-sums $\mfp_1(\sms As)$, \ldots, $\mfp_k(\sms As)$, and therefore, by the multiset $\sms As$. 

Finally, a multiset of $n$ numbers is fully determined by the sequence of its first $n$ power-sums, which concludes the proof.
\end{proof}

In this section we will prove several important properties of the Moser polynomials and their values.

\begin{prop}
\label{thm:fsk_nduality}
For any natural numbers $s$, $k > 1$, and $n\gte k$, the equality $\fsk sk(n) = (-1)^k\fsk {n-s}k(n)$ holds true.
\end{prop}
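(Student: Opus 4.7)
The plan is to prove the identity using the distinguished multiset $\Znk$ from Notation \ref{def:znk} and Lemma \ref{thm:fskn_znk}, which identifies $\mskn skn = \fsk sk(n)$ with $\mfp_k(\sms{\Znk}{s})/k$.

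The key observation is that, since $k > 1$, the sum of the $k$-th roots of unity vanishes, so the total sum of the elements of $\Znk$ equals $0$. Consequently, if we pick any $s$-element index subset $I \subseteq \{1, \ldots, n\}$ and denote by $\sigma_I$ the corresponding $s$-sum of $\Znk$, then the $(n-s)$-sum over the complementary indices is $-\sigma_I$. Complementation thus furnishes a bijection between the $s$-index-subsets and the $(n-s)$-index-subsets of $\Znk$ in which paired sums are negatives of one another. As multisets this yields
$$
\sms{\Znk}{n-s} \;=\; \bigl\{-\sigma \;:\; \sigma \in \sms{\Znk}{s}\bigr\}\pp
$$

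Taking $k$-th power-sums and using that $k$ need not be specified to be even or odd, we get
$$
\mfp_k\bigl(\sms{\Znk}{n-s}\bigr) \;=\; \sum_{\sigma \in \sms{\Znk}{s}} (-\sigma)^k \;=\; (-1)^k\,\mfp_k\bigl(\sms{\Znk}{s}\bigr)\pp
$$
Dividing both sides by $k$ and invoking Lemma \ref{thm:fskn_znk} twice---once for index $s$ and once for index $n-s$, which is legitimate provided $n \gte k$ and $1 \lte s \lte n-1$ (so that both $s$ and $n-s$ are natural numbers $\lte n$)---gives $\mskn {n-s}kn = (-1)^k \mskn skn$. Since $\mskn skn = \fsk sk(n)$ and $(-1)^k = (-1)^{-k}$, this is equivalent to the stated identity $\fsk sk(n) = (-1)^k \fsk {n-s}k(n)$.

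The argument is essentially an involution on subsets, so the only real care required is in checking that the hypothesis $k > 1$ is genuinely used (to guarantee $\sum Z_{n,k} = 0$), and in tracking the applicability range of Lemma \ref{thm:fskn_znk}. No obstacle beyond bookkeeping is anticipated; a direct manipulation of the defining binomial sum for $\fsk sk$ would also work but would be considerably more opaque than this combinatorial route.
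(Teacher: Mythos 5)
Your proof is correct and is essentially the paper's own argument: both evaluate $\mskn skn$ on the root-of-unity multiset $\Znk$ via Lemma \ref{thm:fskn_znk} and exploit that its elements sum to zero (this is where $k>1$ enters), so that complementation of index sets--equivalently, the paper's reflection $B=-A$ with $\sms{A}{n-s}=\sms{B}{s}$--turns $(n-s)$-sums into negatives of $s$-sums. Your version merely makes explicit the vanishing total sum that the paper labels ``obviously,'' which is a welcome clarification rather than a deviation.
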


\begin{proof}
If $s \lte 0$ or $s\gte n$ then our equation follows from definition of Moser polynomials and from the previous item. Hence we can assume that $0 < s < n$.

Now let us again employ the $n$-multiset $A = \Znk$ from Lemma \ref{thm:fskn_znk}; we know that $\mfp_k(\sms As) = k \fsk sk(n)$.  

Let multiset $B$ be a reflection of $A$ in complex plane with respect to zero; in other words, $B = -A$. Then, obviously, $\sms A{n-s} = \sms Bs$ and $\mfp_k(B) = (-1)^k\mfp_k(A)$.

Thus we have
$$
k \fsk {n-s}k(n) \mfp_k(A) = \mfp_k(\sms A{n-s}) = \mfp_k(\sms Bs) = (-1)^k k \fsk sk(n) \pp
$$

Divide this equality by $k$ and we are done.
\end{proof}

\begin{prop} For any natural numbers $s$, $k$, and $n$ the following recurrency equations hold.
\label{thm:MoserPolyRecs}
\begin{enumerate}

\item\label{itm:mskn_recur_sx} $\vfs$
$\mskn sk{n+1} = \mskn skn + \mskn {s-1}kn$.

\item\label{itm:mskn_recur_ksx} $\vfs$
$\mskn s{k+1}n = s\cdot \mskn skn - n\cdot \mskn {s-1}k{n-1}$.

\end{enumerate} 
\end{prop}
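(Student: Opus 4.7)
The plan is to reduce both recurrences to routine manipulations of the explicit Gordon--Fraenkel--Straus formula
$$
\mskn skn = \fsk sk(n) = \sum_{j=1}^{s}(-1)^{j-1}j^{k-1}\binom{n}{s-j}
$$
from Theorem \ref{thm:fskn}. Both sides of each recurrence are polynomials in $n$, so it suffices to verify the identities as polynomial identities in this closed form; any range restrictions used in deriving the formula are immaterial once we pass to the polynomial statement.

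For part (\ref{itm:mskn_recur_sx}), I would apply Pascal's rule $\binom{n+1}{s-j}=\binom{n}{s-j}+\binom{n}{s-j-1}$ term-by-term inside the defining sum for $\fsk sk(n+1)$. The first piece is exactly $\fsk sk(n)$. In the second piece, the $j=s$ contribution contains $\binom{n}{-1}=0$, so the sum runs effectively over $j=1,\ldots,s-1$, and reindexing gives
$$
\sum_{j=1}^{s-1}(-1)^{j-1}j^{k-1}\binom{n}{(s-1)-j}=\fsk{s-1}k(n).
$$

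For part (\ref{itm:mskn_recur_ksx}), the key elementary identity is $n\binom{n-1}{s-1-j}=(s-j)\binom{n}{s-j}$, which is immediate from the factorial definition. It lets me rewrite
$$
n\cdot\mskn{s-1}k{n-1}=\sum_{j=1}^{s}(-1)^{j-1}j^{k-1}(s-j)\binom{n}{s-j},
$$
where extending the range from $j\le s-1$ to $j\le s$ is harmless because the factor $(s-j)$ kills the new $j=s$ term. Subtracting this from $s\cdot\mskn skn=\sum_{j=1}^{s}(-1)^{j-1}j^{k-1}\,s\,\binom{n}{s-j}$ produces coefficient $s-(s-j)=j$, which promotes $j^{k-1}$ to $j^{k}$; the resulting sum is precisely $\fsk s{k+1}(n)=\mskn s{k+1}n$.

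The main obstacle, such as it is, is purely bookkeeping: being careful with the summation ranges and confirming that the boundary terms introduced by reindexing vanish, either because $\binom{n}{-1}=0$ or because of the factor $(s-j)$. No new conceptual ingredient beyond Theorem \ref{thm:fskn} is needed, and in particular the recurrences could equivalently be deduced from the umbral-style identities $\fsk sk(x+1)-\fsk sk(x)=\fsk{s-1}k(x)$ and the differential-shift relation between $\fsk s{k+1}$ and $\fsk sk$, though the direct binomial computation above seems cleanest.
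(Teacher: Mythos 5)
Your proposal is correct: both verifications go through, the boundary terms you flag ($\binom{n}{-1}=0$ and the factor $s-j$ at $j=s$) are exactly the points that need checking, and the identity $n\binom{n-1}{s-1-j}=(s-j)\binom{n}{s-j}$ does the work for item (2). Compared with the paper: for item (2) your argument is essentially the paper's, just written in the binomial-coefficient basis rather than the falling-power basis --- the paper expands both sides as $\sum_j x^{[s-j]}\beta_j$ and uses $x\cdot(x-1)^{[s-1-j]}=x^{[s-j]}$ together with $1/(s-1-j)!=(s-j)/(s-j)!$, which is precisely your identity in disguise, and arrives at the same coefficient promotion $s-(s-j)=j$. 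For item (1) the paper explicitly concedes that the recurrence ``immediately follows from formula \eqref{eq:fsk_psum}'' --- i.e., your Pascal's-rule computation --- but then deliberately presents a different proof ``for variety sake'': it adjoins an element $z$ to an $n$-multiset $A$, writes $\sms Bs=\sms As\msu T_z(\sms A{s-1})$, applies the translation lemma, evaluates at $z=0$, and compares the coefficients of $\mfp_k$. That multiset argument has the virtue of proving the recurrence directly from the definition \eqref{eq:fsk_sigma} of $\mskn skn$, independently of the Gordon--Fraenkel--Straus closed form; your route is shorter but leans entirely on Theorem \ref{thm:fskn}. One trivial nitpick: in item (1) no reindexing is actually needed, since $\binom{n}{s-j-1}=\binom{n}{(s-1)-j}$ already has the right form.
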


\begin{proof} We begin with an easy but useful lemma.

\begin{lemma}
\label{thm:sk_tz}
Given $n$-multiset $A$ and number $z$ we construct $n$-multiset $A' = T_z(A)$, where the translation function $T_z:\mbr \rightarrow \mbr$ is defined by formula $T_z(x) = x+z$. Then the following equality holds true
$$
\mfp_k(A') = \sum_{i=0}^k \binom{k}{i} \mfp_{k-i}(A)\, z^i \pp
$$
\end{lemma}

\begin{proof}
Let us assume that $A = \{a_1, \ldots, a_n\}$. Then 
\begin{multline}
\mfp_k(A') = \sum_{j=1}^n (z+a_j)^k =
\sum_{j=1}^n\sum_{i=0}^k \binom{k}{i} z^i a_j^{k-i} = \\
= \sum_{i=0}^k \sum_{j=1}^n \binom{k}{i} z^i a_j^{k-i} = 
\sum_{i=0}^k \binom{k}{i} z^i \sum_{j=1}^n a_j^{k-i} = 
\sum_{i=0}^k \binom{k}{i} z^i \mfp_{k-i}(A) \nn
\end{multline}
\end{proof}

Item \ref{itm:mskn_recur_sx} immediately follows from formula \eqref{eq:fsk_psum}. However, for variety sake, we will present here a proof which relies only on definition of $\mskn skn$ given in \eqref{eq:fsk_sigma}.

First, the case $s\lte 1$ is obvious. Therefore we can assume that $s\gte 2$.

Second, we can assume that $n\gte k$ and $n\gte s$.

Now let us consider some $n$-multiset $A = \{a_1, \ldots, a_n\}$, arbitrary number $z$ and $n+1$-multiset $B = A \msu \{z\}$. Then multiset $\sms Bs$ is, obviously, equal to $\sms As \msu T_z(\sms A{s-1})$.

Let us consider all the expressions below as polynomials in $z$. For instance, power-sum $\mfp_k(\sms Bs)$ taken as such polynomial has degree $k$.

By definition of $\mskn skn$ we have
\begin{align}
\mfp_k(\sms Bs) &= \mskn sk{n+1} \mfp_k(B) +
\tqskn sk{n+1}(\mfp_1(B), \mfp_2(B), \ldots, \mfp_{k-1}(B)) \nc
\mfp_k(\sms{A}s) &= \mskn skn \mfp_k(A) +
\tqskn skn(\mfp_1(A), \mfp_2(A), \ldots, \mfp_{k-1}(A)) \nn
\end{align}
and we also know that
\begin{equation}
\label{eq:skbs_skas}
\mfp_k(\sms Bs) = \mfp_k(\sms{A}s) + \mfp_k(T_z(\sms A{s-1})) \pp
\end{equation}

Using lemma \ref{thm:sk_tz} we get
\begin{multline}
\mfp_k(T_z(\sms{A}{s-1})) = \sum_{i=0}^k \binom{k}{i} \mfp_{k-i}(\sms A{s-1})\, z^i = \\
   = \sum_{i=0}^k \binom{k}{i}\left( \mskn {s-1}{k-i}n \mfp_{k-i}(A) + \tqskn {s-1}{k-i}n(\mfp_1(A),\mfp_2(A),\ldots,\mfp_{k-i-1}(A)) \right)z^i \nn
\end{multline}

Now, let us compute the constant term of polynomials on both sides in \eqref{eq:skbs_skas}---for that we set $z=0$. With $z=0$ and $k>0$ we have $\mfp_k(B) = \mfp_k(A)$. Thus, the left-hand side of \eqref{eq:skbs_skas} evaluated at $z=0$ equals
$$
\mfp_k(\sms Bs) = \mskn sk{n+1} \mfp_k(A) +
\tqskn sk{n+1}(\mfp_1(A), \mfp_2(A), \ldots, \mfp_{k-1}(A))\pc
$$
and doing the same for the right-hand side of \eqref{eq:skbs_skas} we obtain
$$
\mskn skn \mfp_k(A) + \tqskn skn(\mfp_1(A), \mfp_2(A), \ldots, \mfp_{k-1}(A)) + 
\mskn {s-1}kn \mfp_{k}(A) + \tqskn{s-1}kn(\mfp_1(A),\mfp_2(A),\ldots,\mfp_{k-1}(A)) \pp \nn
$$
Coefficients at terms $\mfp_k$ in these two expressions must coincide, Q.E.D. We have just proved the required equality for infinitely many values of $n$---namely, for all sufficiently large natural numbers. Thus, we have also proved the following polynomial identity
\begin{equation}
\label{eq:fsk_recur_sx}
\fsk sk(x+1) = \fsk sk(x) + \fsk {s-1}k(x) \pp
\end{equation}

\smallskip

The most straightforward way to prove item~\ref{itm:mskn_recur_ksx} is to use Theorem \eqref{thm:fskn}. So instead of item~\ref{itm:mskn_recur_ksx} we have to prove polynomial identity
\begin{equation}
\label{eq:fsk_recur_ksx}
\fsk s{k+1}(x) = s \cdot \fsk sk(x) - x \cdot \fsk {s-1}k(x-1) \pp
\end{equation}
To demonstrate that some two polynomials are identical it is sufficient to prove that sequences of their coefficients are identical. Here instead of representing each one of these polynomials in the regular way---as a sum of power monomials $x^j$---and then comparing their coefficients, we will use another basis of the polynomial ring $\mbr[x]$, namely the basis of the ``falling power'' polynomials. Thus, we express each polynomial as a sum $\sum_{j=0}^m\beta_jx^{[j]}$ and then show that their ``falling-power'' coefficients $\beta_j$ coincide.

For the the left-hand side of our recurrence equation we have
$$
\fsk s{k+1}(x) = \sum_{j=0}^s (-1)^{j-1} j^{k} \binom{x}{s-j} = 
\sum_{j=0}^s x^{[s-j]} \frac{(-1)^{j-1} j^{k}}{(s-j)!}
$$
and for the right-hand side, using equality $a^{[b]} = a\cdot (a-1)^{[b-1]}$,
\begin{align}
s \cdot \fsk s{k}(x) - x \cdot \fsk {s-1}{k}(x-1) &= 
s \sum_{j=0}^s (-1)^{j-1} j^{k-1} \binom{x}{s-j} - x \sum_{j=0}^{s-1} (-1)^{j-1} j^{k-1} \binom{x-1}{s-1-j} =
\nc
&= \sum_{j=0}^s x^{[s-j]} \frac{(-1)^{j-1} s j^{k-1}}{(s-j)!} -
x \sum_{j=0}^{s-1} (x-1)^{[s-1-j]} \frac{(-1)^{j-1} j^{k-1}}{(s-1-j)!}
\nc
&= \sum_{j=0}^s x^{[s-j]}\frac{(-1)^{j-1} j^{k-1}}{(s-j)!}
\left( s - (s-j) \right) = \sum_{j=0}^s x^{[s-j]} \frac{(-1)^{j-1} j^{k}}{(s-j)!} \pp
\nn
\end{align}
\end{proof}

\medskip

Applying recurrency equation \eqref{eq:fsk_recur_sx} multiple times gives us the following.

\begin{prop}
\label{thm:fsk_recurrs_d}
For any number $x$ and any natural numbers $d$, $s$ and $k$ equalities
\begin{align}
\label{eq:fsk_recurr_d}
\fsk {s+d}k(x+d) &= \sum_{j=0}^d \binom{d}{j} \fsk {s+j}k(x) \nc
\fsk sk(x+d)     &= \fsk sk(x) + \sum_{j=0}^{d-1} \fsk {s-1}k(x+j) \pp \nn
\end{align}
hold true.
\end{prop}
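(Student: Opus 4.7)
The plan is to derive both identities by iterating the one-step recurrence \eqref{eq:fsk_recur_sx}, namely $\fsk sk(x+1) = \fsk sk(x) + \fsk{s-1}k(x)$, which was established in Proposition \ref{thm:MoserPolyRecs}. The second identity is essentially a telescoping sum, and the first is a standard induction in $d$ using Pascal's rule; neither should present any real obstacle.

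For the second equation, I would simply rewrite \eqref{eq:fsk_recur_sx} as $\fsk sk(x+j+1) - \fsk sk(x+j) = \fsk{s-1}k(x+j)$ and sum both sides over $j = 0, 1, \ldots, d-1$. The left-hand side telescopes to $\fsk sk(x+d) - \fsk sk(x)$, giving exactly the claim.

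For the first equation, I would proceed by induction on $d$. The base case $d = 0$ is trivial, and $d = 1$ is precisely \eqref{eq:fsk_recur_sx} applied with $s$ replaced by $s+1$. For the inductive step, assume the identity holds for $d$. I would apply \eqref{eq:fsk_recur_sx} at the point $x + d$ with index $s + d + 1$ to write
\[
\fsk{s+d+1}k(x+d+1) = \fsk{s+d+1}k(x+d) + \fsk{s+d}k(x+d),
\]
then invoke the inductive hypothesis twice: once in the form $\fsk{s+d}k(x+d) = \sum_{j=0}^{d}\binom dj \fsk{s+j}k(x)$, and once with $s$ shifted to $s+1$ giving $\fsk{(s+1)+d}k(x+d) = \sum_{j=0}^{d}\binom dj \fsk{s+1+j}k(x)$. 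Reindexing the second sum by $j \mapsto j-1$ and combining the two sums via Pascal's identity $\binom dj + \binom d{j-1} = \binom{d+1}{j}$ (with the endpoint terms matching $\binom{d+1}{0}$ and $\binom{d+1}{d+1}$) yields
\[
\fsk{s+d+1}k(x+d+1) = \sum_{j=0}^{d+1}\binom{d+1}{j}\fsk{s+j}k(x),
\]
completing the induction.

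The main ``work'' is just bookkeeping with the Pascal identity in the inductive step; the only thing worth flagging is making sure the two shifted applications of the inductive hypothesis line up at the correct boundary terms so that the combined sum starts at $j = 0$ and ends at $j = d+1$ cleanly.
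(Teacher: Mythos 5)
Your proposal is correct and matches the paper's intent exactly: the paper offers no written proof beyond the remark that the result follows by "applying recurrency equation \eqref{eq:fsk_recur_sx} multiple times," and your telescoping argument for the second identity and Pascal-rule induction for the first are precisely that iteration carried out in full. Both computations check out, including the boundary terms $\binom{d}{-1}=\binom{d}{d+1}=0$ in the inductive step.
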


Our next corollary shows how the formula for the Moser polynomials can be rewritten using the Eulerian numbers.

\begin{prop}
\label{thm:fsk_moser_eul}
\begin{equation}
\label{eq:fsk_moser_eul}
\fsk sk(x) = (-1)^{s-1}\sum_{j=0}^{s-1} (-1)^j \eulerian{k-1}{s-j-1} \binom{x-k}{j} \pp
\end{equation}
\end{prop}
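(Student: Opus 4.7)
My plan is to prove \eqref{eq:fsk_moser_eul} as a polynomial identity in $x$ by induction on $k$, using the recurrence \eqref{eq:fsk_recur_ksx} from Proposition~\ref{thm:MoserPolyRecs}. Let $R_{s,k}(x)$ denote the right-hand side of \eqref{eq:fsk_moser_eul}.

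For the base case $k=1$, the only nonzero Eulerian number in row $k-1 = 0$ is $\eulerian{0}{0} = 1$, so only the $j = s-1$ term survives and $R_{s,1}(x) = \binom{x-1}{s-1}$. On the other hand, $\fsk s1(x) = \sum_{j=1}^s (-1)^{j-1}\binom{x}{s-j} = \sum_{i=0}^{s-1}(-1)^{s-1-i}\binom{x}{i}$, which collapses to $\binom{x-1}{s-1}$ by a standard telescoping using Pascal's rule $\binom{x}{i} = \binom{x-1}{i} + \binom{x-1}{i-1}$.

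For the inductive step, assume $\fsk{\sigma}k(x) = R_{\sigma,k}(x)$ (as a polynomial in $x$) for every $\sigma$. By the recurrence \eqref{eq:fsk_recur_ksx}, it suffices to verify that
$$
R_{s,k+1}(x) = s\, R_{s,k}(x) - x\, R_{s-1,k}(x-1).
$$
I would apply the classical Eulerian recurrence $\eulerian{k}{M} = (M+1)\eulerian{k-1}{M} + (k-M)\eulerian{k-1}{M-1}$ to each Eulerian number $\eulerian{k}{s-j-1}$ appearing in $R_{s,k+1}(x)$, then extract and compare the coefficient of $\eulerian{k-1}{m}$ on the two sides. After matching sign factors and re-indexing (setting $a = s-m-1$), this reduces to the single polynomial identity
$$
s\binom{x-k}{a} - x\binom{x-k-1}{a-1} = (m+1)\binom{x-k-1}{a} - (k-m-1)\binom{x-k-1}{a-1}.
$$
Applying Pascal's identity $\binom{x-k}{a} = \binom{x-k-1}{a} + \binom{x-k-1}{a-1}$ to rewrite the left-hand side and using $s - m - 1 = a$ to simplify, this collapses to the elementary Pochhammer relation $a\binom{x-k-1}{a} = (x-k-a)\binom{x-k-1}{a-1}$, which is immediate from the definition of the binomial coefficient.

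The main obstacle is purely bookkeeping: keeping the index ranges straight (particularly at the boundary $m = s-1$, where the $x R_{s-1,k}(x-1)$ term contributes nothing), and making sure the Eulerian recurrence is applied with the correct index shift between the two summations. Once the reduction to $a\binom{y}{a} = (y-a+1)\binom{y}{a-1}$ is reached, the proof is complete.
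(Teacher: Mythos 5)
Your proof is correct, but it takes a genuinely different route from the paper's. The paper proves \eqref{eq:fsk_moser_eul} directly: it substitutes the Vandermonde convolution $\binom{x}{p} = \sum_j \binom{x-k}{j}\binom{k}{p-j}$ into the defining sum $\sum_p (-1)^p\binom{x}{p}(s-p)^{k-1}$, swaps the order of summation, and recognizes the inner sum as $\eulerian{k-1}{s-j-1}$ via the explicit formula \eqref{eq:eulerian}. You instead carry out precisely the alternative the paper alludes to in its opening sentence (``this can be proved using the recurrency equations \ref{thm:MoserPolyRecs}''): induction on $k$ via the recurrence $\fsk s{k+1}(x) = s\,\fsk sk(x) - x\,\fsk{s-1}k(x-1)$, matched against the classical triangle recurrence $\eulerian{k}{M} = (M+1)\eulerian{k-1}{M} + (k-M)\eulerian{k-1}{M-1}$. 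I checked your reduction: collecting the coefficient of $\eulerian{k-1}{m}$ with $a = s-m-1$ does collapse, after Pascal's rule, to $a\binom{x-k-1}{a} = (x-k-a)\binom{x-k-1}{a-1}$, and the boundary case $a=0$ (where the $x\,R_{s-1,k}(x-1)$ term is absent) is consistent. Two remarks on trade-offs. Your argument needs only the defining recurrence of the Eulerian triangle and reuses the already-established identity \eqref{eq:fsk_recur_ksx}, so it is more elementary in its inputs; the price is the index bookkeeping you acknowledge, plus one small point you should make explicit: the base case requires the convention $\eulerian{0}{0}=1$, which is what the explicit formula \eqref{eq:eulerian} gives but which sits uneasily with the blanket statement ``$\eulerian nm = 0$ for $m\gte n$'' in the Introduction. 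The paper's direct computation is shorter and yields the special value $\fsk sk(k) = (-1)^{s-1}\eulerian{k-1}{s-1}$ as an immediate byproduct, whereas in your approach that value is only read off from the finished identity.
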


\begin{proof}
This can be proved using the recurrency equations \ref{thm:MoserPolyRecs} but there is a more straightforward way. Regardless, we will need the explicit formula for Eulerian numbers (see \cite{GraKnuPat})
\begin{equation}
\label{eq:eulerian}
\eulerian{n}{m} = \sum_{j=0}^m(-1)^j\binom{n+1}j (m+1-j)^n
\end{equation}
(which, by the way, immediately allows us to see that $\fsk sk(k) = (-1)^{s-1}\eulerian{k-1}{s-1}$), and the well-known summation property of binomial coefficients
$$
\binom{x}{a} = \sum_{j=0}^{\infty} \binom{x-b}{j} \binom{b}{a-j} \pc
$$
which holds true for any integers $a$ and $b$.

Now,
\begin{align}
(-1)^{s-1}\fsk sk(x) &= \sum_{p=0}^s(-1)^p \binom{x}{p} (s-p)^{k-1} \nc
   &= \sum_{p=0}^s(-1)^p (s-p)^{k-1} \sum_{j=0}^{\infty} \binom{x-k}{j}\binom{k}{p-j} \nc
   &= \sum_{j=0}^{\infty} \binom{x-k}{j} \sum_{p=0}^s (-1)^p (s-p)^{k-1} \binom{k}{p-j} \nc
   &= \sum_{j=0}^{\infty} \binom{x-k}{j} \sum_{p=-j}^{s-j} (-1)^{p+j} (s-p-j)^{k-1} \binom{k}{p} \nc
   &= \sum_{j=0}^{\infty} \binom{x-k}{j} (-1)^{j} \sum_{p=0}^{s-j} (-1)^{p} \binom{k}{p} (s-p-j)^{k-1} \nc
   &= \sum_{j=0}^{\infty} \binom{x-k}{j} (-1)^j \eulerian{k-1}{s-j-1} \nc
   &= \sum_{j=0}^{s-1} (-1)^j \binom{x-k}{j} \eulerian{k-1}{s-j-1} \pp \nn
\end{align}
\end{proof}

When $x=n$ is an integer, this can be rewritten using the backward difference operator $\nabla$. Namely, in this case the formula \eqref{eq:fsk_moser_eul} is equivalent to
$$
\fsk sk(n) = (-1)^{s-1} \, \nabla^{n-k}_{s-1} \mca^{(k-1)} \pc
$$
where $\mca^{(k-1)}$ is the sequence of numbers $\eulerian{k-1}{m}$ constituting the $(k-1)$th row of the Eulerian triangle. That is, the Moser polynomial's value at $n$ is the $(n-k)$th discrete backward derivative (or corresponding discrete ``integral'', if $n<k$) of sequence $\mca^{(k-1)}$, computed at its $(s-1)$th term. Or, equivalently,
$$
(x-1)^{n-k}\,\mca_{k-1}(x)  = \sum_{s=0}^n (-1)^{s-1}\fsk sk(n)x^s
$$
where $\mca_{k-1}(x)$ is the $(k-1)$th Eulerian polynomial.

\smallskip

One interesting corollary of the previous Proposition and Lemma \ref{thm:fskn_znk}.

\begin{corollary}
\label{thm:zskn_eulerian}
Consider set $\Zkk$ (see \eqref{eq:znk}) which consists of all complex roots of unity of $k$th order. Then eulerian number $\eulerian{k-1}{s-1}$ equals the sum of $k$th powers of all $s$-sums of $\Zkk$ multiplied by $(-1)^{s-1}/k$. In other words,
$$
\eulerian{k-1}{s-1} = (-1)^{s-1} \, \mfp_k(\sms{\Zkk}{s}) / k \pp
$$
\end{corollary}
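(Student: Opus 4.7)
The plan is to evaluate Proposition \ref{thm:fsk_moser_eul} at the specific point $x=k$ and compare with Lemma \ref{thm:fskn_znk} in the case $n=k$. Both sides then collapse dramatically, making the identity fall out immediately.

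First, I would set $x = k$ in formula \eqref{eq:fsk_moser_eul}. The binomial coefficient becomes $\binom{x-k}{j} = \binom{0}{j}$, which equals $1$ when $j=0$ and vanishes for every $j \ge 1$. Hence the sum collapses to its single $j=0$ summand, giving
$$
\fsk sk(k) = (-1)^{s-1} \eulerian{k-1}{s-1}.
$$
(The paper already noted this as an aside right after displaying \eqref{eq:eulerian}, so this step is really just re-invoking that observation.)

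Next, I would apply Lemma \ref{thm:fskn_znk} with $n = k$, which is permitted since the hypotheses $n \ge k$ and $n \ge s$ reduce to $s \le k$ (the only nontrivial range, as $\eulerian{k-1}{s-1} = 0$ when $s \ge k$ and $\sms{\Zkk}{s}$ is undefined for $s > k$). The lemma yields
$$
\mskn skk = \mfp_k\!\left(\sms{\Zkk}{s}\right)\big/k.
$$
Since $\mskn skk = \fsk sk(k)$ by the definition of the Moser polynomial, combining the two displayed equalities gives
$$
(-1)^{s-1} \eulerian{k-1}{s-1} = \mfp_k\!\left(\sms{\Zkk}{s}\right)\big/k,
$$
and multiplying by $(-1)^{s-1}$ produces the claimed formula.

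There is really no main obstacle here; the whole argument is a two-line combination of previously established results. The only thing worth double-checking is the boundary behavior, namely that the identity holds trivially at $s = k$ (both sides are zero, since $\eulerian{k-1}{k-1}=0$ and the unique $k$-sum of all $k$th roots of unity vanishes for $k \ge 2$), so that the proof covers all meaningful values $1 \le s \le k$.
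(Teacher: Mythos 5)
Your proof is correct and follows essentially the same route as the paper: both arguments identify each side of the identity with $(-1)^{s-1}\fsk sk(k)$, using the evaluation $\fsk sk(k)=(-1)^{s-1}\eulerian{k-1}{s-1}$ (noted in the paper right after \eqref{eq:eulerian}) together with Lemma \ref{thm:fskn_znk} at $n=k$, where $\mfp_k(\Zkk)=k$. The extra boundary check at $s=k$ is harmless but not needed, since the lemma's hypotheses already cover that case.
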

\begin{proof}
Since $k = \mfp_k(\Zkk)$ both these numbers are equal to $(-1)^{s-1} \fsk sk(k) $. 
\end{proof}

Finally, two more formulas. They express Moser polynomials using Stirling numbers of the second kind.

\begin{prop}
\label{thm:fsk_moser_stirling}
\begin{align}
\label{eq:fsk_via_stirling1}
\fsk sk(x) &= \sum_{i=1}^k (-1)^{i-1}(i-1)! \Stirling{k}{i} \binom{x-i}{s-i} \\
\label{eq:fsk_via_stirling2}
\fsk sk(x) &= \sum_{i=1}^k (-1)^{i+k-1} i! \Stirling{k-1}{i} \binom{x-i-1}{s-1} 
\end{align}
(obviously, all the summands with index $i > k$ are zeros so the upper summation limit could be, if necessary, changed to infinity; similarly, the lower summation limit can be changed to zero or even to $-\infty$.)
\end{prop}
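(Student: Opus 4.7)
My plan is to prove the two identities separately, since they require different techniques. For formula (13), I would use a direct manipulation of the defining sum; for formula (14), the cleanest route I see is induction on $s$, leveraging the forward-difference recurrence of Proposition \ref{thm:MoserPolyRecs}(\ref{itm:mskn_recur_sx}).

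For formula (\ref{eq:fsk_via_stirling1}), I would start from the classical Stirling-2 expansion $j^k = \sum_{i=1}^k i!\Stirling{k}{i}\binom{j}{i}$. Dividing through by $j$ (using $\binom{j}{i}/j = \binom{j-1}{i-1}/i$) gives
$$ j^{k-1} = \sum_{i=1}^k (i-1)!\Stirling{k}{i}\binom{j-1}{i-1}\pp $$
Substituting into $\fsk sk(x) = \sum_{j=1}^s(-1)^{j-1}j^{k-1}\binom{x}{s-j}$ and swapping the two summations reduces the task to evaluating $\sum_{j\geq 1}(-1)^{j-1}\binom{j-1}{i-1}\binom{x}{s-j}$. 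This is a Vandermonde--Chu convolution: it is the coefficient of $t^s$ in the product of formal power series $\sum_{j\geq 1}(-1)^{j-1}\binom{j-1}{i-1}t^j \cdot (1+t)^x = (-1)^{i-1}t^i(1+t)^{x-i}$, and so equals $(-1)^{i-1}\binom{x-i}{s-i}$. Collecting the pieces yields (\ref{eq:fsk_via_stirling1}).

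For formula (\ref{eq:fsk_via_stirling2}) I would induct on $s$; let $R_s(x)$ denote its right-hand side. The base $s = 1$ reduces to checking $\fsk 1k(x) \equiv 1 = R_1(x)$; indeed $R_1(x) = \sum_i(-1)^{i+k-1}i!\Stirling{k-1}{i} = (-1)^{k-1}\cdot(-1)^{k-1} = 1$ via the identity $\sum_i(-1)^i i!\Stirling{n}{i} = (-1)^n$ (obtained from $j^n = \sum_i i!\Stirling{n}{i}\binom{j}{i}$ at $j = -1$). For the inductive step, Pascal's identity $\binom{x-i}{s} - \binom{x-i-1}{s} = \binom{x-i-1}{s-1}$ gives $R_{s+1}(x+1) - R_{s+1}(x) = R_s(x) = \fsk sk(x)$ by the inductive hypothesis, and Proposition \ref{thm:MoserPolyRecs}(\ref{itm:mskn_recur_sx}) delivers the same first-difference for $\fsk{s+1}k$. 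Hence $R_{s+1} - \fsk{s+1}k$ is a constant polynomial; evaluating at $x = 0$, where $\fsk{s+1}k(0) = (-1)^s(s+1)^{k-1}$ directly from the definition (only the $j = s+1$ term survives), and $R_{s+1}(0) = (-1)^{s+k-1}\sum_i(-1)^i i!\Stirling{k-1}{i}\binom{s+i}{s}$ collapses to the same value via $j^{k-1} = \sum_i i!\Stirling{k-1}{i}\binom{j}{i}$ at $j = -(s+1)$ (combined with the negation identity $\binom{-(s+1)}{i} = (-1)^i\binom{s+i}{i}$), pins the constant at zero.

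The main obstacle sits in (\ref{eq:fsk_via_stirling2}): a naive substitution $j^{k-1} = \sum_i i!\Stirling{k-1}{i}\binom{j}{i}$ into the defining sum produces $\binom{x-i-1}{s-i}$ rather than the desired $\binom{x-i-1}{s-1}$, and the two representations are genuinely different (not term-by-term equal, only globally equal as polynomials). The induction above circumvents this obstruction by exploiting the forward-difference recurrence for $\fsk sk$ together with two instances of the fundamental Stirling-2 identity, at $j=-1$ for the base case and $j=-(s+1)$ for fixing the integration constant.
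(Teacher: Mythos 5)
Your proposal is correct, but both halves run along genuinely different lines from the paper. For \eqref{eq:fsk_via_stirling1} the paper works in the opposite direction: it expands the Stirling numbers on the right-hand side via their explicit alternating-sum formula and reduces everything to the convolution identity $\sum_i \binom{i}{j}\binom{n-i}{n-s}=\binom{n+1}{s-j}$, proved by generating functions; you instead substitute $j^{k-1}=\sum_i (i-1)!\Stirling{k}{i}\binom{j-1}{i-1}$ directly into the defining sum \eqref{eq:fsk_psum} and finish with a single Vandermonde--Chu step. The two are comparable in length, though yours avoids the detour through the explicit Stirling formula. For \eqref{eq:fsk_via_stirling2} the difference is more substantial: the paper derives it from the Eulerian representation \eqref{eq:fsk_moser_eul} by substituting Knop's identity $\eulerian{n}{k}=\sum_i(-1)^{i-k}\binom{i}{k}(n-i)!\Stirling{n}{n-i}$ and convolving, so it leans on Proposition \ref{thm:fsk_moser_eul} and an external reference; your induction on $s$ via the recurrence \eqref{eq:fsk_recur_sx}, with the difference $R_{s+1}(x+1)-R_{s+1}(x)=R_s(x)$ coming from Pascal's rule and the constant pinned down by evaluating the polynomial identity $j^{k-1}=\sum_i i!\Stirling{k-1}{i}\binom{j}{i}$ at $j=-1$ and $j=-(s+1)$, is self-contained and bypasses the Eulerian machinery entirely. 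Your observation that the naive substitution produces the (correct but different) representation with $\binom{x-i-1}{s-i}$ rather than $\binom{x-i-1}{s-1}$ is accurate and explains why some indirection is unavoidable here. One small caveat, shared with the statement itself: for $k=1$ the sum in \eqref{eq:fsk_via_stirling2} must start at $i=0$ (as the parenthetical remark permits), and your base case should be read accordingly.
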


\begin{proof}

We can assume without loss of generality that $x = n$ is a natural number. Then, using the explicit formula for the Stirling numbers of the second kind (see identity \textbf{6.19} in \cite{GraKnuPat})
$$
\Stirling{n}{m} = \frac 1{m!}\sum_{j=0}^m (-1)^{m-j}\binom{m}{j} j^n \pc
$$
we turn the right-hand side of \eqref{eq:fsk_via_stirling1} into
\begin{align}
\sum_{i=1}^k (-1)^{i-1}(i-1)! \Stirling{k}{i} \binom{n-i}{n-s}
  &= 
\sum_{i=1}^s \sum_{j=1}^s (-1)^{i-1}(i-1)! \frac 1{i!} (-1)^{i-j} \binom ij j^k \binom{n-i}{n-s}
\nc
  &=
\sum_{i=1}^s \sum_{j=1}^s (-1)^{j-1} j^{k-1} \binom{n-i}{n-s} \binom{i-1}{j-1}
\nc
  &=
\sum_{j=1}^s (-1)^{j-1} j^{k-1} \sum_{i=1}^s \binom{i-1}{j-1} \binom{n-i}{n-s}
\nn
\end{align}
(notice that the limits' adjustments done here do not affect the sums.) Now we see that it is sufficient to prove that for any $s$, $j$, and $n$ the equality
\begin{equation}
\label{eq:ij_nis}
\sum_{i=0}^s \binom{i}{j} \binom{n-i}{n-s} = \binom{n+1}{s-j}
\end{equation}
holds true. To do that, consider the generating function
$$
\frac{t^j}{(1-t)^{j+1}} = \sum_{i=0}^{\infty} \binom{i}{j} t^i 
$$
and compare coefficients at $t^n$ on the both sides of the equality
$$
\frac{t^j}{(1-t)^{j+1}} \cdot \frac{t^{n-s}}{(1-t)^{n-s+1}} = \frac{t^{n-s+j}}{(1-t)^{n-s+j+2}} = 
\frac 1t \cdot \frac{t^{n-s+j}}{(1-t)^{n-s+j+1}} \pp
$$
Note: Formula \eqref{eq:ij_nis} is basically the same as a slightly more general identity \textbf{5.26} in \cite{GraKnuPat}, which can be proved in the exactly same manner.

To prove \eqref{eq:fsk_via_stirling2}, we will start with equality
$$
\eulerian{n}{k} = \sum_{i=k}^{n-1} (-1)^{i-k} \binom{i}{k}(n-i)! \Stirling{n}{n-i}
$$
proved in \cite{Kno}. Substituting that into \eqref{eq:fsk_moser_eul} we obtain
\begin{align}
\fsk sk(n) &= (-1)^{s-1}\sum_{j=0}^{s-1} (-1)^j \eulerian{k-1}{s-j-1} \binom{x-k}{j} \nc
&= (-1)^{s-1}\sum_{j=0}^{s-1} \sum_{i=s-j-1}^{k-2} (-1)^j \binom{n-k}{j} (-1)^{i-(s-j-1)} \binom{i}{s-j-1}(k-1-i)! \Stirling{k-1}{k-1-i} \nc
&= (-1)^{s-1}\sum_{i=0}^{k-1} (-1)^{i-s+1} (k-1-i)! \Stirling{k-1}{k-1-i} \sum_{j=0}^{s-1} \binom{n-k}{j} \binom{i}{s-j-1} \nc
&= \sum_{i=0}^{k-1} (-1)^i (k-1-i)! \Stirling{k-1}{k-1-i} \binom{n-k+i}{s-1}  \pp
\nn
\end{align}
and making substitution $i \rightarrow k-1-i$, we have formula \eqref{eq:fsk_via_stirling2} as well.

\end{proof}

\presection


\begin{thebibliography}{99}

\bibitem{Fom}
    Fomin,~D.V. (2017),
    \textit{Is the Multiset of $n$ Integers Uniquely Determined by the Multiset of its $s$-sums?}.
    ArXiv e-prints, 1709.06046, math.NT, Sep.~2017

\bibitem{GraKnuPat}
    Graham,~R.L., Knuth,~D.E. Patashnik,~O. (1994),
    \textit{Concrete Mathematics: a Foundation for Computer Science}, 2nd edition.
    Addison-Wesley Longman Publishing Company, Boston, MA, USA

\bibitem{GorFraStr}
    Gordon,~B., Fraenkel,~A.S., Straus,~E.G. (1962),
    \textit{On the determination of sets by the sets of sums of a certain order}.
    Pacific J. Math., \textbf{12},  pp.~187--196

\bibitem{Kno}
    Knop,~R. (1973)
    \textit{A Note on Hypercube Partitions}.
    J.~Comb.~Theory, \textbf{15}, pp.~338--342

\bibitem{Mos}
    Moser,~L. (1957)
    \textit{Problem E1248}.
    Amer.~Math.~Monthly, \textbf{64}, p.507

\bibitem{MenRem}
    Mendes,~A., Remmel,~J. (2015),
    \textit{Counting with Symmetric Functions}.
    Springer International Publishing AG, Switzerland

\bibitem{Pet}
    Petersen,~T. (2015),
    \textit{Eulerian Numbers}.
    Birkh\"auser Advanced Texts, Springer New York, NY, USA

\end{thebibliography}
\end{document}